\newcommand{\Boole}{\mathcal{B}}
\newcommand{\reals}{\mathbb{R}}
\newtheorem{thm}{Theorem}
\journal{Computers \& Mathematics with Applications}
\begin{document}
\begin{frontmatter}

%% Title, authors and addresses

%% use the tnoteref command within \title for footnotes;
%% use the tnotetext command for theassociated footnote;
%% use the fnref command within \author or \address for footnotes;
%% use the fntext command for theassociated footnote;
%% use the corref command within \author for corresponding author footnotes;
%% use the cortext command for theassociated footnote;
%% use the ead command for the email address,
%% and the form \ead[url] for the home page:
\title{New error bounds for Boole's rule}
%% \tnotetext[label1]{}
\author{Mateusz Krukowski}
\ead{krukowski.mateusz13@gmail.com}
%% \ead[url]{home page}
%% \fntext[label2]{}
%% \cortext[cor1]{}
\address{Technical University of Lodz, W\'olczanska 215, 90-924, \L\'od\'z}
%% \fntext[label3]{}

%% use optional labels to link authors explicitly to addresses:
%% \author[label1,label2]{}
%% \address[label1]{}
%% \address[label2]{}

\begin{abstract}
In recent years, a lot of research was devoted to Simpson's rule for numerical integration. In the paper we study a natural successor of Simpson's rule, namely the Boole's rule. It is the Newton-Cotes formula in the case where the interval of integration is divided into four subintervals of equal length. With computer software assistance, we prove novel error bounds for Boole's rule.
\end{abstract}

\begin{keyword}
numerical integration \sep Boole's rule \sep error bounds
%% keywords here, in the form: keyword \sep keyword

%% PACS codes here, in the form: \PACS code \sep code

\MSC[2010] 65G99
%% MSC codes here, in the form: \MSC code \sep code
%% or \MSC[2008] code \sep code (2000 is the default)

\end{keyword}

\end{frontmatter}

%%\linenumbers

%% main text
\section{Introduction}
\label{intro}

In recent years, a significant amount of research has been devoted to Simpson's rule for numerical integration:
$$\int_a^b\ f(t)\ dt \sim \frac{h}{3}\cdot \left(f(a) + 4f\left(\frac{a+b}{2}\right) + f(b)\right),\hspace{0.4cm} \text{where}\hspace{0.4cm} h= \frac{a+b}{2}.$$

\noindent
Among many formidable sources, let us mention just a few: \cite{DragomirAgarwalCerone, CruzUribeNeugebauer, Dragomir, DragomirPecaricWang, DragomirFedotov, Ujevic2004, Ujevic2007}. It is the author's impression that while Simpson's rule enjoys considerable popularity, Boole's rule remains somewhat neglected. The following paper is a modest attempt to change this status quo. 

The main part of the paper is Section \ref{mainresults}, in which we suggest six novel estimates for the Boole's rule
$$\int_a^b\ f(t)\ dt \sim \frac{2h}{45}\cdot \left(7f(a) + 32f\left(\frac{3a+b}{4}\right) + 12f\left(\frac{a+b}{2}\right) + 32f\left(\frac{a+3b}{4}\right)+ 7f(b)\right),$$

\noindent
where $h = \frac{a+b}{4}.$ For convenience, let us denote the right hand side of the Boole's rule by $\Boole(f)$.

Since the calculations become quite tedious very fast, we frequently resort to Maple computer software, which greatly facilitates computations. We took the liberty of enclosing some parts of the code in case the Readers wanted to verify the constants on their own.

Last but not least, in Section \ref{examples} we test the performance of the novel estimates. It turns out that for monomials $t\mapsto t^k$, (apart from first few instances), the new error bounds are better than the classical, well-know result.

\section{Main results}
\label{mainresults}

Before we proceed with the main results of the paper, let us denote 
$$I(g) = \frac{g(b)-g(a)}{b-a}.$$

\noindent
Furthermore, let $M(g)$ denote the constant $M$ such that 
$$\forall_{a.e.\ t\in[a,b]}\ g(t) \leq M,$$

\noindent
and similarly, let $m(g)$ denote the constant $m$ such that 
$$\forall_{a.e.\ t\in[a,b]}\ m\leq g(t).$$

\begin{thm}
Let $f:[a,b]\longrightarrow \reals$ be an absolutely continuous function such that $m(f')$ and $M(f')$ are both finite. Then 
\begin{gather}
\bigg|\int_a^b\ f(t)\ dt - \Boole(f)\bigg|\leq \frac{11}{60}\cdot \bigg(I(f)-m(f')\bigg) (b-a)^2
\label{mainresult11}
\end{gather}

\noindent
and
\begin{gather}
\bigg|\int_a^b\ f(t)\ dt - \Boole(f)\bigg|\leq \frac{11}{60}\cdot \bigg(M(f') - I(f)\bigg) (b-a)^2
\label{mainresult12}
\end{gather}
\end{thm}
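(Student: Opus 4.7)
The natural strategy is to build a Peano-type kernel of order one for Boole's rule and then exploit the monotonicity of $f'(t)-m(f')$ (resp. $M(f')-f'(t)$).

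\textbf{Step 1: Kernel representation.} Starting from $f(x) = f(a)+\int_a^x f'(s)\,ds$, I plug this into both $\int_a^b f(t)\,dt$ and into each sample value $f(a+kh)$ appearing in $\Boole(f)$. A standard Fubini exchange gives $\int_a^b f(t)\,dt = (b-a)f(a) + \int_a^b (b-s)f'(s)\,ds$, while the sum defining $\Boole(f)$ reduces to $(b-a)f(a) + \int_a^b W(s)f'(s)\,ds$, where $W$ is the step function
\[
W(s) = \frac{2h}{45}\Big(32\,\mathbf 1_{[a,a+h]}(s) + 12\,\mathbf 1_{[a,a+2h]}(s) + 32\,\mathbf 1_{[a,a+3h]}(s) + 7\,\mathbf 1_{[a,b]}(s)\Big)
\]
(here I use that $\frac{2h}{45}(7+32+12+32+7)=b-a$). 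Setting $K(s):=(b-s)-W(s)$ yields the clean identity
\[
\int_a^b f(t)\,dt - \Boole(f) \;=\; \int_a^b K(s)\,f'(s)\,ds.
\]

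\textbf{Step 2: Zero-mean of $K$.} Because Boole's rule integrates the function $t\mapsto t$ exactly, plugging $f(t)=t$ into the identity above forces $\int_a^b K(s)\,ds = 0$. Consequently, for any constant $c$,
\[
\int_a^b f(t)\,dt - \Boole(f) \;=\; \int_a^b K(s)\bigl(f'(s)-c\bigr)\,ds.
\]
Choosing $c=m(f')$ (resp.\ $c=M(f')$) makes the factor $f'(s)-c$ have constant sign a.e., so that
\[
\bigg|\int_a^b f(t)\,dt - \Boole(f)\bigg| \;\le\; \|K\|_\infty\!\int_a^b |f'(s)-c|\,ds \;=\; \|K\|_\infty\,(b-a)\,|I(f)-c|,
\]
where I used $\int_a^b(f'(s)-c)\,ds = (b-a)(I(f)-c)$.

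\textbf{Step 3: Computing $\|K\|_\infty$.} The map $K$ is linear with slope $-1$ on each of the four subintervals $[a+jh,a+(j+1)h]$, $j=0,1,2,3$, and jumps upward by $\tfrac{2h}{45}\cdot 32$ at $a+h$ and $a+3h$, and by $\tfrac{2h}{45}\cdot 12$ at $a+2h$. Evaluating $K$ at the five nodes (using the telescoping values $W=\tfrac{166h}{45},\tfrac{102h}{45},\tfrac{78h}{45},\tfrac{14h}{45}$) gives endpoint values $\pm\tfrac{14h}{45},\,\mp\tfrac{31h}{45},\,\pm\tfrac{33h}{45},\,\mp\tfrac{12h}{45},\ldots$ on successive pieces. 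Scanning the four pieces, the largest absolute value is attained at $s=a+h^{+}$ and $s=a+3h^{-}$, where $|K|=\tfrac{33h}{45}=\tfrac{11h}{15}$. Since $h=(b-a)/4$, this gives $\|K\|_\infty = \tfrac{11(b-a)}{60}$.

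\textbf{Combining.} Inserting this into the inequality of Step 2 yields both \eqref{mainresult11} (with $c=m(f')$) and \eqref{mainresult12} (with $c=M(f')$) at once. The only non-routine step is Step 3: although the kernel is piecewise linear, one has to track the jumps at all three interior nodes carefully to verify that the supremum is really $\tfrac{11h}{15}$ and not one of the several competing values $\tfrac{14h}{45},\tfrac{31h}{45},\tfrac{12h}{45}$ — this is where Maple assistance, as flagged in the introduction, shortens the bookkeeping.
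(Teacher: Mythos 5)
Your proof is correct and follows essentially the same route as the paper: both represent the error as $\int_a^b K(s)f'(s)\,ds$ with the same piecewise-linear Peano kernel (yours is the negative of the paper's, obtained via Fubini rather than by an undetermined-coefficients integration-by-parts ansatz), both use $\int_a^b K=0$ to subtract the constant $m(f')$ or $M(f')$, and both reduce to computing $\|K\|_\infty=\tfrac{11}{60}(b-a)$. Your endpoint values $\pm\tfrac{14h}{45},\mp\tfrac{31h}{45},\pm\tfrac{33h}{45},\mp\tfrac{12h}{45}$ and the resulting supremum $\tfrac{33h}{45}=\tfrac{11}{60}(b-a)$ all check out.
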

\begin{proof}
Let us define 
$$K_i(t) = \alpha_it + \beta_i, \hspace{0.4cm}\text{for}\hspace{0.4cm} i=1,2,3,4,$$

\noindent
where $\alpha_i,\ \beta_i$ are constants which will be determined in the course of the proof. We put 
\begin{gather*}
K(t) = \left\{\begin{array}{ll}
K_1(t) & \text{if } t\in\left[a,\frac{3a+b}{4}\right],\\
K_2(t) & \text{if } t\in\left(\frac{3a+b}{4},\frac{a+b}{2}\right],\\
K_3(t) & \text{if } t\in\left(\frac{a+b}{2},\frac{a+3b}{4}\right],\\
K_4(t) & \text{if } t\in\left(\frac{a+3b}{4},b\right].
\end{array}\right.
\end{gather*}

Integration by parts yields
\begin{equation*}
\begin{split}
\int_a^b\ K(t)f'(t)\ dt = &-K_1(a)f(a) + \left(K_1\left(\frac{3a+b}{4}\right) - K_2\left(\frac{3a+b}{4}\right)\right)f\left(\frac{3a+b}{4}\right) \\
&+ \left(K_2\left(\frac{a+b}{2}\right) - K_3\left(\frac{a+b}{2}\right)\right)f\left(\frac{a+b}{2}\right) \\
&+ \left(K_3\left(\frac{a+3b}{4}\right) - K_4\left(\frac{a+3b}{4}\right)\right)f\left(\frac{a+3b}{4}\right) \\
&+ K_4(b)f(b) - \int_a^b\ K'(t)f(t)\ dt.
\end{split}
\end{equation*}

\noindent
We demand that $K'(t) \equiv 1$, so 
$$\alpha_1 = \alpha_2 = \alpha_3 = \alpha_4 = 1.$$

\noindent
Furthermore, we impose the following conditions:
\begin{gather*}
\left\{\begin{array}{l}
K_1(a) = -\frac{7}{90}\cdot(b-a),\\
K_1\left(\frac{3a+b}{4}\right) - K_2\left(\frac{3a+b}{4}\right) = \frac{32}{90}\cdot(b-a),\\
K_2\left(\frac{a+b}{2}\right) - K_3\left(\frac{a+b}{2}\right) = \frac{12}{90}\cdot(b-a),\\
K_3\left(\frac{a+3b}{4}\right) - K_4\left(\frac{a+3b}{4}\right) = \frac{32}{90}\cdot(b-a),\\
K_4(b) = \frac{7}{90}\cdot(b-a).
\end{array}\right.
\end{gather*}

\noindent
The above system is equivalent to
\begin{gather}
\left\{\begin{array}{l}
a+\beta_1  = -\frac{7}{90}\cdot(b-a),\\
\beta_1-\beta_2 = \frac{32}{90}\cdot(b-a),\\
\beta_2 -\beta_3 = \frac{12}{90}\cdot(b-a),\\
\beta_3 - \beta_4 = \frac{32}{90}\cdot(b-a),\\
b + \beta_4 = \frac{7}{90}\cdot(b-a).
\end{array}\right.
\label{betasystem}
\end{gather}

\noindent
The following Maple code:
\begin{verbatim}
beta1 := simplify(-7/90*(b-a)-a);
beta2 := simplify(beta1 - 32/90*(b-a));
beta3 := simplify(beta2-12/90*(b-a));
beta4 := simplify(beta3 - 32/90*(b-a));
\end{verbatim}

\noindent
returns the solution:
\begin{gather}
\left\{\begin{array}{l}
\beta_1 = -\frac{83a+7b}{90},\\
\beta_2 = -\frac{17a+13b}{30},\\
\beta_3 = -\frac{13a+17b}{30},\\
\beta_4 = -\frac{7a+83b}{90}.
\end{array}\right.
\label{solutionbeta}
\end{gather}

By the above choice of $\alpha_i,\ \beta_i$ and the fact that $K'(t)\equiv 1$, we have
$$\int_a^b\ K(t)f'(t)\ dt = \Boole(f) - \int_a^b\ f(t)\ dt.$$

\noindent
With the help of Maple software we check that $\int_a^b\ K(t)\ dt = 0:$
\begin{verbatim}
K1 := t-> alpha1*t + beta1;
K2 := t-> alpha2*t + beta2;
K3 := t-> alpha3*t + beta3;
K4 := t-> alpha4*t + beta4;
simplify(integrate(K1(t),t=a..(3*a+b)/4) 
+ integrate(K2(t),t=(3*a+b)/4..(a+b)/2)
+ integrate(K3(t),t=(a+b)/2..(a+3*b)/4) 
+ integrate(K4(t),t=(a+3*b)/4..b));
\end{verbatim}

\noindent
We have
\begin{equation}
\begin{split}
&\bigg|\Boole(f) - \int_a^b\ f(t)\ dt \bigg| \leq \bigg|\int_a^b\ K(t)f'(t)\ dt\bigg| = \bigg|\int_a^b\ K(t)\bigg(f'(t)-m(f')\bigg)\ dt\bigg|\\
&\leq \sup_{t\in[a,b]}\ |K(t)| \cdot \int_a^b\ |f'(t)-m(f')|\ dt = \sup_{t\in[a,b]}\ |K(t)| \cdot \bigg(I(f)-m(f')\bigg) (b-a).
\end{split}
\label{mainestimate1}
\end{equation}

It remains to estimate $\sup_{t\in[a,b]}\ |K(t)|$. Since every function $|K_i|,\ i=1,2,3,4$ is convex, when searching for the maximal value it suffices to check the enpoints of the subintervals. We obtain
\begin{gather*}
\sup_{t\in[a,\frac{3a+b}{4}]}\ |K_1(t)| = \frac{31}{180}\cdot (b-a) = \sup_{t\in[\frac{a+3b}{4},b]}\ |K_4(t)|,\\
\sup_{t\in[\frac{3a+b}{4},\frac{a+b}{2}]}\ |K_2(t)| = \frac{11}{60}\cdot (b-a) = \sup_{t\in[\frac{a+b}{2},\frac{a+3b}{4}]}\ |K_3(t)|,
\end{gather*}

\noindent
which may be verified with the Maple code:
\begin{verbatim}
simplify(max(abs(K1(a)),abs(K1((3*a+b)/4))));
simplify(max(abs(K2((3*a+b)/4)),abs(K2((a+b)/2))));
simplify(max(abs(K3((a+b)/2)),abs(K3((a+3*b)/4))));
simplify(max(abs(K4((a+3*b)/4)),abs(K4(b))));
\end{verbatim}

\noindent
We conclude that 
\begin{gather*}
\sup_{t\in[a,b]}\ |K(t)| = \frac{11}{60}\cdot (b-a),
\end{gather*}

\noindent
which, due to (\ref{mainestimate1}), proves (\ref{mainresult11}). Inequality (\ref{mainresult12}) is proven analogously.
\end{proof}

\begin{thm}
Let $f:[a,b]\longrightarrow \reals$ be such that $f'$ is absolutely continuous and $m(f'')$ and $M(f'')$ are both finite. Then 
\begin{gather}
\bigg|\int_a^b\ f(t)\ dt - \Boole(f)\bigg|\leq \frac{17}{1440}\cdot \bigg(I(f')-m(f'')\bigg) (b-a)^3
\label{mainresult21}
\end{gather}

\noindent
and
\begin{gather}
\bigg|\int_a^b\ f(t)\ dt - \Boole(f)\bigg|\leq \frac{17}{1440}\cdot \bigg(M(f'')-I(f')\bigg) (b-a)^3.
\label{mainresult22}
\end{gather}
\end{thm}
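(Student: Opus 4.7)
The plan is to mirror the strategy of the preceding proof, but to upgrade the piecewise linear kernel to a piecewise quadratic one and to apply integration by parts twice. Define
$$L_i(t) = -\tfrac{1}{2} t^2 + \gamma_i t + \delta_i, \qquad i = 1,2,3,4,$$
on the same four subintervals as in the previous proof, so that $L_i''(t) \equiv -1$. After two integrations by parts on each subinterval and summation, the boundary contributions split into an ``$f'$-part'' (the jumps of $L$ together with $L(a)$ and $L(b)$) and an ``$f$-part'' (the jumps of $L'$ together with $L_1'(a)$ and $L_4'(b)$). I would force $L$ itself to be continuous on $[a,b]$ and to vanish at both endpoints, so that the $f'$-part collapses entirely, and then impose jump conditions on $L'$ tailored to reproduce the Boole coefficients. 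Concretely, the system
$$L_1'(a) = \tfrac{7(b-a)}{90},\quad \gamma_2-\gamma_1 = \tfrac{32(b-a)}{90},\quad \gamma_3-\gamma_2 = \tfrac{12(b-a)}{90},\quad \gamma_4-\gamma_3 = \tfrac{32(b-a)}{90},\quad L_4'(b) = -\tfrac{7(b-a)}{90}$$
is overdetermined by one equation, but its consistency follows from $7+32+12+32+7 = 90$, paralleling the role played by (\ref{betasystem}) in the previous proof.

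Once the $\gamma_i$ are pinned down, the constants $\delta_i$ are determined uniquely by the continuity of $L$ at the three interior nodes together with $L(a)=0$; the identity $L(b)=0$ then comes out for free, because $\int_a^b L'(t)\,dt$ turns out to be zero by the same symmetry. A short Maple check, entirely analogous to the code displayed in the previous proof, further confirms that $\int_a^b L(t)\,dt = 0$, and our construction yields
$$\int_a^b L(t) f''(t)\,dt \;=\; \Boole(f) - \int_a^b f(t)\,dt.$$
Invoking $\int_a^b L\,dt = 0$ to insert $m(f'')$, and using $f''-m(f'')\ge 0$ a.e.\ together with $\int_a^b (f''(t)-m(f''))\,dt = (I(f')-m(f''))(b-a)$, I would then obtain
$$\bigg|\int_a^b f(t)\,dt - \Boole(f)\bigg| \;\le\; \sup_{t\in[a,b]}|L(t)| \cdot \bigl(I(f')-m(f'')\bigr)(b-a).$$
Inequality (\ref{mainresult22}) is proven symmetrically, replacing $f''-m(f'')$ by $M(f'')-f''$.

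The main obstacle is the explicit computation of $\sup_{[a,b]}|L|$. Each restriction $L_i$ is a concave parabola, so its extrema on the $i$-th subinterval occur either at the endpoints, whose values are already determined by the continuity conditions, or at the vertex $t=\gamma_i$, provided this lies inside the subinterval. The candidate values depend on the $\delta_i$, which have somewhat unwieldy closed forms, and one has to verify that the overall maximum is attained at the breakpoints $\tfrac{3a+b}{4}$ and $\tfrac{a+3b}{4}$ rather than at an interior vertex. I would perform these comparisons in Maple, analogously to the earlier code, and expect the outcome $\sup_{[a,b]}|L(t)| = \tfrac{17}{1440}(b-a)^2$. Substituting this into the displayed inequality produces the claimed constant $\tfrac{17}{1440}$ and completes the proof.
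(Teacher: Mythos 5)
Your proposal is correct and follows essentially the same route as the paper: a piecewise quadratic Peano-type kernel with $K''\equiv\pm1$, jump conditions on $K'$ encoding the Boole weights, vanishing/continuity conditions on $K$ killing the $f'$-boundary terms, insertion of $m(f'')$ via $\int_a^b K = 0$, and a computer-assisted evaluation of $\sup|K| = \frac{17}{1440}(b-a)^2$ at the breakpoints. Your kernel is just the negative of the paper's, which changes nothing since only $|K|$ enters the estimate.
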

\begin{proof}
Let us define 
$$K_i(t) = \alpha_it^2 + \beta_it + \gamma_i, \hspace{0.4cm}\text{for}\hspace{0.4cm} i=1,2,3,4,$$

\noindent
where $\alpha_i,\ \beta_i,\ \gamma_i$ are constants which will be determined in the course of the proof. Again, we put
\begin{gather*}
K(t) = \left\{\begin{array}{ll}
K_1(t) & \text{if } t\in\left[a,\frac{3a+b}{4}\right],\\
K_2(t) & \text{if } t\in\left(\frac{3a+b}{4},\frac{a+b}{2}\right],\\
K_3(t) & \text{if } t\in\left(\frac{a+b}{2},\frac{a+3b}{4}\right],\\
K_4(t) & \text{if } t\in\left(\frac{a+3b}{4},b\right].
\end{array}\right.
\end{gather*}

Integration by parts yields
\begin{gather*}
\int_a^b\ K(t)f''(t)\ dt = C_1 + C_2 + \int_a^b\ K''(t)f(t)\ dt,
\end{gather*}

\noindent
where
\begin{equation*}
\begin{split}
C_1 = &-K_1(a)f'(a) + \left(K_1\left(\frac{3a+b}{4}\right) - K_2\left(\frac{3a+b}{4}\right)\right)f'\left(\frac{3a+b}{4}\right) \\
&+ \left(K_2\left(\frac{a+b}{2}\right) - K_3\left(\frac{a+b}{2}\right)\right)f'\left(\frac{a+b}{2}\right) \\
&+ \left(K_3\left(\frac{a+3b}{4}\right) - K_4\left(\frac{a+3b}{4}\right)\right)f'\left(\frac{a+3b}{4}\right) + K_4(b)f'(b),
\end{split}
\end{equation*}

\noindent
and
\begin{equation*}
\begin{split}
C_2 = &K_1'(a)f(a) - \left(K_1'\left(\frac{3a+b}{4}\right) - K_2'\left(\frac{3a+b}{4}\right)\right)f\left(\frac{3a+b}{4}\right) \\
&- \left(K_2'\left(\frac{a+b}{2}\right) - K_3'\left(\frac{a+b}{2}\right)\right)f\left(\frac{a+b}{2}\right) \\
&- \left(K_3'\left(\frac{a+3b}{4}\right) - K_4'\left(\frac{a+3b}{4}\right)\right)f\left(\frac{a+3b}{4}\right) - K_4'(b)f(b).
\end{split}
\end{equation*}

\noindent
We demand that $K''(t)\equiv 1$, so 
$$\alpha_1 = \alpha_2 = \alpha_3 = \alpha_4 = \frac{1}{2}.$$

\noindent
Furthermore, we impose the following conditions
\begin{gather*}
\left\{\begin{array}{l}
K_1'(a) = -\frac{7}{90}\cdot(b-a),\\
K_1'\left(\frac{3a+b}{4}\right) - K_2'\left(\frac{3a+b}{4}\right) = \frac{32}{90}\cdot(b-a),\\
K_2'\left(\frac{a+b}{2}\right) - K_3'\left(\frac{a+b}{2}\right) = \frac{12}{90}\cdot(b-a),\\
K_3'\left(\frac{a+3b}{4}\right) - K_4'\left(\frac{a+3b}{4}\right) = \frac{32}{90}\cdot(b-a),\\
K_4'(b) = \frac{7}{90}\cdot(b-a),
\end{array}\right.
\end{gather*}

\noindent
which are equivalent to the system (\ref{betasystem}). We already know that the solution to this system is given by (\ref{solutionbeta}).

Last but not least, we require that 
\begin{gather*}
\left\{\begin{array}{l}
K_1(a) = 0,\\
K_1\left(\frac{3a+b}{4}\right) - K_2\left(\frac{3a+b}{4}\right) = 0,\\
K_2\left(\frac{a+b}{2}\right) - K_3\left(\frac{a+b}{2}\right) = 0,\\
K_3\left(\frac{a+3b}{4}\right) - K_4\left(\frac{a+3b}{4}\right) = 0,\\
K_4(b) = 0.
\end{array}\right.
\end{gather*}

\noindent
This system is equivalent to:
\begin{gather}
\left\{\begin{array}{l}
\alpha_1 a^2 + \beta_1a + \gamma_1 = 0,\\
(\beta_1-\beta_2)\cdot \frac{3a+b}{4} + \gamma_1-\gamma_2 = 0,\\
(\beta_2-\beta_3)\cdot \frac{a+b}{2} + \gamma_2-\gamma_3 = 0,\\
(\beta_3-\beta_4)\cdot \frac{a+3b}{4} + \gamma_3-\gamma_4 = 0,\\
\alpha_4b^2+\beta_4b+\gamma_4 = 0.
\end{array}\right.
\label{gammasystem}
\end{gather}

\noindent
The following Maple code:
\begin{verbatim}
gamma1 := -simplify(alpha1*a^2+beta1*a);
gamma2 := simplify((beta1-beta2)*(3*a+b)/4+gamma1);
gamma3 := simplify((beta2-beta3)*(a+b)/2+gamma2);
gamma4 := simplify((beta3-beta4)*(a+3*b)/4+gamma3);
\end{verbatim}

\noindent
returns the solution:
\begin{gather}
\left\{\begin{array}{l}
\gamma_1 = \frac{19}{45}\cdot a^2 + \frac{7}{90}\cdot ab,\\
\gamma_2 = \frac{7}{45}\cdot a^2 + \frac{23}{90}\cdot ab + \frac{4}{45}\cdot b^2,\\
\gamma_3 = \frac{(a+2b)(8a+7b)}{90},\\
\gamma_4 = \frac{b(7a+38b)}{90}.
\end{array}\right.
\label{solutiongamma}
\end{gather}

By the above choice of $\alpha_i,\ \beta_i,\ \gamma_i$ and the fact that $K''(t)\equiv 1$, we have
$$\int_a^b\ K(t)f''(t)\ dt = \Boole(f) - \int_a^b\ f(t)\ dt.$$

\noindent
With the help of Maple software we check that $\int_a^b\ K(t)\ dt = 0:$
\begin{verbatim}
K1 := t-> alpha1*t^2 + beta1*t + gamma1;
K2 := t-> alpha2*t^2 + beta2*t + gamma2;
K3 := t-> alpha3*t^2 + beta3*t + gamma3;
K4 := t-> alpha4*t^2 + beta4*t + gamma4;
simplify(integrate(K1(t),t=a..(3*a+b)/4) 
+ integrate(K2(t),t=(3*a+b)/4..(a+b)/2)
+ integrate(K3(t),t=(a+b)/2..(a+3*b)/4) 
+ integrate(K4(t),t=(a+3*b)/4..b));
\end{verbatim}

\noindent
We have
\begin{equation}
\begin{split}
&\bigg|\Boole(f) - \int_a^b\ f(t)\ dt \bigg| \leq \bigg|\int_a^b\ K(t)f''(t)\ dt\bigg| = \bigg|\int_a^b\ K(t)\bigg(f''(t)-m(f'')\bigg)\ dt\bigg|\\
&\leq \sup_{t\in[a,b]}\ |K(t)| \cdot \int_a^b\ |f''(t)-m(f'')|\ dt = \sup_{t\in[a,b]}\ |K(t)| \cdot \bigg(I(f')-m(f'')\bigg) (b-a).
\end{split}
\label{mainestimate2}
\end{equation}

It remains to estimate $\sup_{t\in[a,b]}\ |K(t)|$. It is easy to see that the critical points of $K_i$ are $-\beta_i$ respectively. Furthermore, we have 
\begin{gather*}
a \leq -\beta_1 \leq \frac{3a+b}{4},\\
\frac{3a+b}{4} \leq -\beta_2 \leq \frac{a+b}{2},\\
\frac{a+b}{2} \leq -\beta_3 \leq \frac{a+3b}{4},\\
\frac{a+3b}{4} \leq -\beta_4 \leq b.
\end{gather*}

\noindent
Hence, we have
\begin{equation*}
\begin{split}
\sup_{t\in[a,\frac{3a+b}{4}]}\ |K_1(t)| &= \sup_{t\in[\frac{3a+b}{4},\frac{a+b}{2}]}\ |K_2(t)| = \sup_{t\in[\frac{a+b}{2},\frac{a+3b}{4}]}\ |K_3(t)| \\
&= \sup_{t\in[\frac{a+3b}{4},b]}\ |K_4(t)| = \frac{17}{1440}\cdot (b-a)^2,
\end{split}
\end{equation*}

\noindent
which can be verified with the following Maple code:
\begin{verbatim}
simplify(max(abs(K1(a)),abs(K1(-beta1)),abs(K1((3*a+b)/4))));
simplify(max(abs(K2((3*a+b)/4)),abs(K2(-beta2)),abs(K2((a+b)/2))));
simplify(max(abs(K3((a+b)/2)),abs(K3(-beta3)),abs(K3((a+3*b)/4))));
simplify(max(abs(K4((a+3*b)/4)),abs(K4(-beta4)),abs(K4(b))));
\end{verbatim}

\noindent
We conclude that 
$$\sup_{t\in[a,b]}\ |K(t)| = \frac{17}{1440}\cdot (b-a)^2,$$

\noindent
which, due to (\ref{mainestimate2}) proves (\ref{mainresult21}). Inequality (\ref{mainresult22}) is proven analogously. 
\end{proof}

\begin{thm}
Let $f:[a,b]\longrightarrow \reals$ be such that $f''$ is absolutely continuous and $m(f''')$ and $M(f''')$ are both finite. Then 
\begin{gather}
\bigg|\int_a^b\ f(t)\ dt - \Boole(f)\bigg|\leq \frac{1}{1620}\cdot \bigg(I(f'')-m(f''')\bigg) (b-a)^4
\label{mainresult31}
\end{gather}

\noindent
and
\begin{gather}
\bigg|\int_a^b\ f(t)\ dt - \Boole(f)\bigg|\leq \frac{1}{1620}\cdot \bigg(M(f''')-I(f'')\bigg) (b-a)^4.
\label{mainresult32}
\end{gather}
\end{thm}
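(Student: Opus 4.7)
The plan is to mirror the structure of the proofs of Theorems~1 and~2, now promoting $K$ to a piecewise cubic so that three integrations by parts can bring the Boole weights to the level of $f$ while leaving $\int f$ as the remaining integral. Specifically, I would set
$$K_i(t) = \alpha_i t^3 + \beta_i t^2 + \gamma_i t + \delta_i, \quad i = 1,2,3,4,$$
and glue them into a piecewise $K$ on the same partition $a < \frac{3a+b}{4} < \frac{a+b}{2} < \frac{a+3b}{4} < b$ as before. Integrating $\int_a^b K(t)f'''(t)\,dt$ by parts three times produces three boundary sums $C_1, C_2, C_3$ involving $f''$, $f'$, $f$ respectively, plus the term $-\int_a^b K'''(t)f(t)\,dt$. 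The first demand $K''' \equiv 1$ forces $\alpha_i = \tfrac{1}{6}$, after which the last term collapses to $-\int_a^b f(t)\,dt$.

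Next, I would impose three successive layers of algebraic conditions. The first requires the coefficients of $f(a), f(\tfrac{3a+b}{4}), \ldots, f(b)$ in $C_3$ to match the Boole weights; since $K_i''(t) = t + 2\beta_i$, this is a direct analogue of system (\ref{betasystem}), and its solution for $\beta_i$ comes out (up to the trivial factor $2$) exactly as in (\ref{solutionbeta}), computable by a one-line modification of the given Maple snippet. The second layer demands $C_2 = 0$, i.e., continuity of $K'$ at the interior nodes together with $K_1'(a) = K_4'(b) = 0$, which yields a five-equation linear system in $\gamma_i$ completely analogous to (\ref{gammasystem}). The third layer demands $C_1 = 0$, i.e., continuity of $K$ at the interior nodes and $K_1(a) = K_4(b) = 0$, producing yet another five-equation system, this time in $\delta_i$. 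In each case the five equations are consistent because the Boole weights sum to $7+32+12+32+7=90$; I would solve the three systems in Maple just as the previous solutions were obtained.

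With $\alpha_i, \beta_i, \gamma_i, \delta_i$ pinned down, the identity $\int_a^b K(t)f'''(t)\,dt = \Boole(f) - \int_a^b f(t)\,dt$ follows (up to sign, absorbed by the absolute value), and the auxiliary check $\int_a^b K(t)\,dt = 0$ would be run in Maple to legitimise the replacement of $f'''$ by $f''' - m(f''')$ in
$$\bigg|\Boole(f) - \int_a^b f(t)\,dt\bigg| \leq \sup_{t\in[a,b]}|K(t)| \cdot (I(f'') - m(f'''))(b-a).$$

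The main obstacle is the final step: computing $\sup_{t\in[a,b]}|K(t)|$. Each $K_i$ is a genuine cubic, so the candidates for its maximum on $[x_{i-1},x_i]$ are the two endpoints of that subinterval together with the (up to two) real roots of the quadratic $K_i'(t) = \tfrac{1}{2}t^2 + 2\beta_i t + \gamma_i = 0$. I would check, subinterval by subinterval, which roots actually fall inside, evaluate $|K_i|$ there and at the endpoints, and verify in Maple (in the spirit of the code blocks already displayed) that the overall supremum equals $\tfrac{1}{1620}(b-a)^3$. The symmetry of the Boole weights about the midpoint should collapse the four subintervals to two genuine cases, which keeps the bookkeeping manageable. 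The target bound then drops out by substitution, and inequality (\ref{mainresult32}) is handled analogously with $M(f''')$ in place of $m(f''')$.
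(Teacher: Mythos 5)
Your proposal follows essentially the same route as the paper's own proof: the same piecewise cubic kernel with $\alpha_i=\tfrac{1}{6}$, the same three layers of boundary conditions (Boole weights on the jumps of $K''$, vanishing jumps of $K'$ and of $K$), the same subtraction of $m(f''')$ justified by $\int_a^b K(t)\,dt=0$, and the same endpoint-plus-interior-critical-point search yielding $\sup|K|=\tfrac{1}{1620}(b-a)^3$. The only details left implicit (which roots of $K_i'$ lie in each subinterval, and the symmetry reducing four cases to two) are exactly what the paper also settles by direct computation, so the argument is correct and complete in the same sense as the original.
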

\begin{proof}
Let us define 
$$K_i(t) = \alpha_it^3 + \beta_it^2 + \gamma_it + \delta_i, \hspace{0.4cm}\text{for}\hspace{0.4cm} i=1,2,3,4,$$

\noindent
where $\alpha_i,\ \beta_i,\ \gamma_i,\ \delta_i$ are constants which will be determined in the course of the proof. Again, we put
\begin{gather*}
K(t) = \left\{\begin{array}{ll}
K_1(t) & \text{if } t\in\left[a,\frac{3a+b}{4}\right],\\
K_2(t) & \text{if } t\in\left(\frac{3a+b}{4},\frac{a+b}{2}\right],\\
K_3(t) & \text{if } t\in\left(\frac{a+b}{2},\frac{a+3b}{4}\right],\\
K_4(t) & \text{if } t\in\left(\frac{a+3b}{4},b\right].
\end{array}\right.
\end{gather*}

Integration by parts yields
\begin{gather*}
\int_a^b\ K(t)f'''(t)\ dt = C_1 + C_2 + C_3 - \int_a^b\ K'''(t)f(t)\ dt,
\end{gather*}

\noindent
where
\begin{equation*}
\begin{split}
C_1 = &-K_1(a)f''(a) + \left(K_1\left(\frac{3a+b}{4}\right) - K_2\left(\frac{3a+b}{4}\right)\right)f''\left(\frac{3a+b}{4}\right) \\
&+ \left(K_2\left(\frac{a+b}{2}\right) - K_3\left(\frac{a+b}{2}\right)\right)f''\left(\frac{a+b}{2}\right) \\
&+ \left(K_3\left(\frac{a+3b}{4}\right) - K_4\left(\frac{a+3b}{4}\right)\right)f''\left(\frac{a+3b}{4}\right) + K_4(b)f''(b), \\
C_2 = &K_1'(a)f'(a) - \left(K_1'\left(\frac{3a+b}{4}\right) - K_2'\left(\frac{3a+b}{4}\right)\right)f'\left(\frac{3a+b}{4}\right) \\
&- \left(K_2'\left(\frac{a+b}{2}\right) - K_3'\left(\frac{a+b}{2}\right)\right)f'\left(\frac{a+b}{2}\right) \\
&- \left(K_3'\left(\frac{a+3b}{4}\right) - K_4'\left(\frac{a+3b}{4}\right)\right)f'\left(\frac{a+3b}{4}\right) - K_4'(b)f'(b),
\end{split}
\end{equation*}

\noindent
and
\begin{equation*}
\begin{split}
C_3 = &-K_1''(a)f(a) + \left(K_1''\left(\frac{3a+b}{4}\right) - K_2''\left(\frac{3a+b}{4}\right)\right)f\left(\frac{3a+b}{4}\right) \\
&+ \left(K_2''\left(\frac{a+b}{2}\right) - K_3''\left(\frac{a+b}{2}\right)\right)f\left(\frac{a+b}{2}\right) \\
&+ \left(K_3''\left(\frac{a+3b}{4}\right) - K_4''\left(\frac{a+3b}{4}\right)\right)f\left(\frac{a+3b}{4}\right) + K_4''(b)f(b).
\end{split}
\end{equation*}

\noindent
We demand that $K''(t)\equiv 1$, so 
$$\alpha_1 = \alpha_2 = \alpha_3 = \alpha_4 = \frac{1}{6}.$$

\noindent
Furthermore, we impose the following conditions
\begin{gather*}
\left\{\begin{array}{l}
K_1''(a) = -\frac{7}{90}\cdot(b-a),\\
K_1''\left(\frac{3a+b}{4}\right) - K_2'\left(\frac{3a+b}{4}\right) = \frac{32}{90}\cdot(b-a),\\
K_2''\left(\frac{a+b}{2}\right) - K_3'\left(\frac{a+b}{2}\right) = \frac{12}{90}\cdot(b-a),\\
K_3''\left(\frac{a+3b}{4}\right) - K_4'\left(\frac{a+3b}{4}\right) = \frac{32}{90}\cdot(b-a),\\
K_4''(b) = \frac{7}{90}\cdot(b-a),
\end{array}\right.
\end{gather*}

\noindent
which are equivalent to 
\begin{gather*}
\left\{\begin{array}{l}
a+2\beta_1  = -\frac{7}{90}\cdot(b-a),\\
2(\beta_1-\beta_2) = \frac{32}{90}\cdot(b-a),\\
2(\beta_2 -\beta_3) = \frac{12}{90}\cdot(b-a),\\
2(\beta_3 - \beta_4) = \frac{32}{90}\cdot(b-a),\\
b + 2\beta_4 = \frac{7}{90}\cdot(b-a).
\end{array}\right.
\end{gather*}

\noindent
The following Maple code:
\begin{verbatim}
beta1:= simplify((-a-7/90*(b-a))/2);
beta2:= simplify(beta1-16/90*(b-a));
beta3:= simplify(beta2-6/90*(b-a));
beta4:= simplify(beta3-16/90*(b-a));
\end{verbatim}

\noindent
returns the solution:
\begin{gather*}
\left\{\begin{array}{l}
\beta_1 = -\frac{83a+7b}{180},\\
\beta_2 = -\frac{17a+13b}{60},\\
\beta_3 = -\frac{13a+17b}{60},\\
\beta_4 = -\frac{7a+83b}{180}.
\end{array}\right.
\end{gather*}

What is more, we demand that 
\begin{gather*}
\left\{\begin{array}{l}
K_1'(a) = 0,\\
K_1'\left(\frac{3a+b}{4}\right) - K_2'\left(\frac{3a+b}{4}\right) = 0,\\
K_2'\left(\frac{a+b}{2}\right) - K_3'\left(\frac{a+b}{2}\right) = 0,\\
K_3'\left(\frac{a+3b}{4}\right) - K_4'\left(\frac{a+3b}{4}\right) = 0,\\
K_4'(b) = 0.
\end{array}\right.
\end{gather*}

\noindent
This system is equivalent to:
\begin{gather*}
\left\{\begin{array}{l}
3\alpha_1 a^2 + 2\beta_1a + \gamma_1 = 0,\\
(\beta_1-\beta_2)\cdot \frac{3a+b}{2} + \gamma_1-\gamma_2 = 0,\\
(\beta_2-\beta_3)\cdot (a+b) + \gamma_2-\gamma_3 = 0,\\
(\beta_3-\beta_4)\cdot \frac{a+3b}{2} + \gamma_3-\gamma_4 = 0,\\
3\alpha_4b^2+2\beta_4b+\gamma_4 = 0.
\end{array}\right.
\end{gather*}

\noindent
The following Maple code:
\begin{verbatim}
gamma1:= simplify(-3*alpha1*a^2-2*beta1*a);
gamma2:= simplify((3*a+b)/2*(beta1-beta2) + gamma1);
gamma3:= simplify((a+b)*(beta2-beta3) + gamma2);
gamma4:= simplify((a+3*b)/2*(beta3-beta4) + gamma3);
\end{verbatim}

\noindent
returns the solution:
\begin{gather*}
\left\{\begin{array}{l}
\gamma_1 = \frac{19}{45}\cdot a^2 + \frac{7}{90}\cdot ab,\\
\gamma_2 = \frac{7}{45}\cdot a^2 + \frac{23}{90}\cdot ab + \frac{4}{45}\cdot b^2,\\
\gamma_3 = \frac{(a+2b)(8a+7b)}{90},\\
\gamma_4 = \frac{b(7a+38b)}{90}.
\end{array}\right.
\end{gather*}

Finally, we require that 
\begin{gather*}
\left\{\begin{array}{l}
K_1(a) = 0,\\
K_1\left(\frac{3a+b}{4}\right) - K_2\left(\frac{3a+b}{4}\right) = 0,\\
K_2\left(\frac{a+b}{2}\right) - K_3\left(\frac{a+b}{2}\right) = 0,\\
K_3\left(\frac{a+3b}{4}\right) - K_4\left(\frac{a+3b}{4}\right) = 0,\\
K_4(b) = 0.
\end{array}\right.
\end{gather*}

\noindent
This system is equivalent to:
\begin{gather*}
\left\{\begin{array}{l}
\alpha_1 a^3 + \beta_1a^2 + \gamma_1a + \delta_1 = 0,\\
(\beta_1-\beta_2)\cdot \left(\frac{3a+b}{4}\right)^2 + (\gamma_1-\gamma_2)\cdot \frac{3a+b}{4} + \delta_1-\delta_2 = 0,\\
(\beta_2-\beta_3)\cdot \left(\frac{a+b}{2}\right)^2 + (\gamma_2-\gamma_3)\cdot \frac{a+b}{2} + \delta_2-\delta_3= 0,\\
(\beta_3-\beta_4)\cdot \left(\frac{a+3b}{4}\right)^2 + (\gamma_3-\gamma_4)\cdot \frac{a+3b}{4} + \delta_3-\delta_4 = 0,\\
\alpha_4b^3+\beta_4b^2+\gamma_4b+\delta_4 = 0.
\end{array}\right.
\end{gather*}

\noindent
The following Maple code:
\begin{verbatim}
delta1:= -simplify(alpha1*a^3+beta1*a^2+gamma1*a);
delta2:= simplify((beta1-beta2)*((3*a+b)/4)^2 
+ (gamma1-gamma2)*(3*a+b)/4 + delta1);
delta3:= simplify((beta2-beta3)*((a+b)/2)^2 
+ (gamma2-gamma3)*(a+b)/2 + delta2);
delta4:= simplify((beta3-beta4)*((a+3*b)/4)^2 
+ (gamma3-gamma4)*(a+3*b)/4 + delta3);
\end{verbatim}

\noindent
returns the solution:
\begin{gather*}
\left\{\begin{array}{l}
\delta_1 = -\frac{a^2(23a+7b)}{180},\\
\delta_2 = -\left(\frac{1}{36}\cdot a^3 + \frac{13}{180}\cdot a^2b + \frac{1}{18}\cdot ab^2 + \frac{1}{90}\cdot b^3\right),\\
\delta_3 = -\left(\frac{1}{90}\cdot a^3 + \frac{1}{18}\cdot a^2b + \frac{13}{180}\cdot ab^2 + \frac{1}{36}\cdot b^3\right),\\
\delta_4 = -\frac{b^2(7a+23b)}{180}.
\end{array}\right.
\end{gather*}

By the above choice of $\alpha_i,\ \beta_i,\ \gamma_i,\ \delta_i$ and the fact that $K'''(t)\equiv 1$, we have
$$\int_a^b\ K(t)f'''(t)\ dt = \Boole(f) - \int_a^b\ f(t)\ dt.$$

\noindent
With the help of Maple software we check that $\int_a^b\ K(t)\ dt = 0:$
\begin{verbatim}
K1 := t-> alpha1*t^3 + beta1*t^2 + gamma1*t + delta1;
K2 := t-> alpha2*t^3 + beta2*t^2 + gamma2*t + delta2;
K3 := t-> alpha3*t^3 + beta3*t^2 + gamma3*t + delta3;
K4 := t-> alpha4*t^3 + beta4*t^2 + gamma4*t + delta4;
simplify(integrate(K1(t),t=a..(3*a+b)/4) 
+ integrate(K2(t),t=(3*a+b)/4..(a+b)/2)
+ integrate(K3(t),t=(a+b)/2..(a+3*b)/4) 
+ integrate(K4(t),t=(a+3*b)/4..b));
\end{verbatim}

\noindent
We have
\begin{equation}
\begin{split}
&\bigg|\Boole(f) - \int_a^b\ f(t)\ dt \bigg| \leq \bigg|\int_a^b\ K(t)f'''(t)\ dt\bigg| = \bigg|\int_a^b\ K(t)\bigg(f'''(t)-m(f''')\bigg)\ dt\bigg|\\
&\leq \sup_{t\in[a,b]}\ |K(t)| \cdot \int_a^b\ |f'''(t)-m(f''')|\ dt = \sup_{t\in[a,b]}\ |K(t)| \cdot \bigg(I(f'')-m(f''')\bigg) (b-a).
\end{split}
\label{mainestimate3}
\end{equation}

It remains to estimate $\sup_{t\in[a,b]}\ |K(t)|$. At first we calculate critical points for $K_i:$
\begin{center}
\begin{tabular}{| c | c |}
    \hline
    \textbf{Function} & \textbf{Critical points}  \\ \hline
		$K_1$ & $a, \frac{38a+7b}{45}$ \\ \hline
		$K_2$ & $\frac{7a+8b}{15}, \frac{2a+b}{3}$ \\ \hline
		$K_3$ & $\frac{a+2b}{3}, \frac{8a+7b}{15}$ \\ \hline
		$K_4$ & $\frac{7a+38b}{45},b$ \\ \hline
\end{tabular}
\end{center}

\noindent
The above values are obtained with the Maple code:
\begin{verbatim}
solve(D(K1)(t)=0,t);
solve(D(K2)(t)=0,t);
solve(D(K3)(t)=0,t);
solve(D(K4)(t)=0,t);
\end{verbatim}

\noindent
However, we observe that 
\begin{gather*}
\frac{7a+8b}{15} \not\in \left[\frac{3a+b}{4},\frac{a+b}{2}\right],\\
\frac{8a+7b}{15} \not\in \left[\frac{a+b}{2},\frac{a+3b}{4}\right].
\end{gather*}

\noindent
The Maple code 
\begin{verbatim}
t1:=(38*a+7*b)/45;
t2:=(2*a+b)/3;
t3:=(a+2*b)/3;
t4:=(7*a+38*b)/45;

simplify(max(abs(K1(a)),abs(K1(t1)),abs(K1((3*a+b)/4))));
simplify(max(abs(K2((3*a+b)/4)),abs(K2(t2)),abs(K2((a+b)/2))));
simplify(max(abs(K3((a+b)/2)),abs(K3(t3)),abs(K3((a+3*b)/4))));
simplify(max(abs(K4((a+3*b)/4)),abs(K4(t4)),abs(K4(b))));
\end{verbatim}

\noindent
returns
\begin{equation*}
\begin{split}
\sup_{t\in[a,\frac{3a+b}{4}]}\ |K_1(t)| &= \frac{343}{1093500}\cdot (b-a)^3 = \sup_{t\in[\frac{a+3b}{4},b]}\ |K_4(t)|,\\
\sup_{t\in[\frac{3a+b}{4},\frac{a+b}{2}]}\ |K_2(t)| &= \frac{1}{1620}\cdot (b-a)^3 =  \sup_{t\in[\frac{a+b}{2}\frac{a+3b}{4}]}\ |K_3(t)|,
\end{split}
\end{equation*}

\noindent
We conclude that 
$$\sup_{t\in[a,b]}\ |K(t)| = \frac{1}{1620}\cdot (b-a)^3,$$

\noindent
which, due to (\ref{mainestimate3}) proves (\ref{mainresult31}). Inequality (\ref{mainresult32}) is proven analogously. 
\end{proof}

\section{Examples}
\label{examples}

Let us consider a monomial $f(t) = t^k,$ where the power $k$ is treated as a parameter. Let $a=0$ and $b>0$. The classical error estimate (comp. \cite{Atkinson}, \mbox{p. 266}) for Boole's rule is 
$$\frac{8}{945}\cdot h^7\|f^{(6)}\|,$$

\noindent
which in our case turns out to be 
\begin{gather}
\frac{b^{k+1}}{1935360}\cdot k(k-1)(k-2)(k-3)(k-4)(k-5).
\label{classicalestimate}
\end{gather}

Furthermore, we have
\begin{gather*}
I(f) = 1,\hspace{0.4cm} I(f') = k,\hspace{0.4cm} I(f'') = k(k-1),\\
m(f') = m(f'') = m(f''') = 0,\\
\end{gather*}

\noindent
and
\begin{gather*}
M(f') = kb^{k-1},\\
M(f'') = k(k-1)b^{k-2},\\
M(f''') = k(k-1)(k-2)b^{k-3}.
\end{gather*}

\noindent
In the table below, we summarize the results. The second column is the error bound of the estimate, i.e. the term on the right-hand side of the inequality. The last column indicates for which $k$ does the novel estimate perform better than (\ref{classicalestimate}).

\begin{center}
\begin{tabular}{| c | c | c |}
    \hline
    Estimate & Error bound & Better than (\ref{classicalestimate}) for $k\geq $ \\ \hline
		(\ref{mainresult11}) & $\frac{11}{6}\cdot b^{k+1}$ & 15\\ \hline
		(\ref{mainresult12}) & $\frac{11}{6}\cdot (k-1)b^{k+1}$ & 24\\ \hline
		(\ref{mainresult21}) & $\frac{17}{1440}\cdot kb^{k+1}$ & 11\\ \hline
		(\ref{mainresult22}) & $\frac{17}{1440}\cdot k(k-2)b^{k+1}$ & 16\\ \hline
		(\ref{mainresult31}) & $\frac{1}{1620}\cdot k(k-1)b^{k+1}$ & 10\\ \hline
		(\ref{mainresult32}) & $\frac{1}{1620}\cdot k(k-1)(k-3)b^{k+1}$ & 15\\ \hline
\end{tabular}
\end{center}

%% For citations use: 
%%       \citet{<label>} ==> Jones et al. [21]
%%       \citep{<label>} ==> [21]
%%

%% If you have bibdatabase file and want bibtex to generate the
%% bibitems, please use
%%
%%  \bibliographystyle{elsarticle-num-names} 
%%  \bibliography{<your bibdatabase>}

%% else use the following coding to input the bibitems directly in the
%% TeX file.

\end{document}